\def\A{\operatorname{A}}
\def\E{\operatorname{E}}
\def\F{\operatorname{F}}
\def\K{\operatorname{K}}
\def\map{\longrightarrow}
\def\GL{\operatorname{GL}}
\def\SL{\operatorname{SL}}
\def\Sp{\operatorname{Sp}}
\def\SK{\operatorname{SK}}
\def\EO{\operatorname{EO}}
\def\Cent{\operatorname{Cent}}
\def\sr{\operatorname{sr}}
\def\Rat{{\mathbb Q}}
\def\Int{{\mathbb Z}}
\def\pamod#1{\,(\operatorname{mod}{\, #1})\,}
\newtheorem{theorem}{Theorem}
\newtheorem{lemma}{Lemma}
\newtheorem{problem}{Problem}
\title{Relative centralisers of relative subgroups}
\author{Nikolai Vavilov}
\address{Department of Mathematics and Mechanics,\\
St.~Petersburg State University,\\ St.~Petersburg, Russia}
\email{nikolai-vavilov@yandex.ru}
\thanks{This publication is supported by Russian Science Foundation grant 17-11-01261.}
\author{Zuhong Zhang}
\address{Department of  Mathematics\\
 Beijing Institute of Technology\\
 Beijing, China}
\email{zuhong@hotmail.com}
\keywords{General linear groups, elementary subgroups, congruence
subgroups, standard commutator formula, unrelativised commutator formula, elementary generators}
\begin{document}


\begin{abstract}
Let $R$ be an associative ring with 1, $G=\GL(n, R)$ be the general 
linear group of degree $n\ge 3$ over $R$. 
In this paper we calculate the relative centralisers of the relative
elementary subgroups or the principal congruence subgroups, corresponding
to an ideal $A\unlhd R$ modulo the relative elementary subgroups or 
the principal congruence subgroups, corresponding to another ideal 
$B\unlhd R$. Modulo congruence subgroups the results are essentially
easy exercises in linear algebra. But modulo the elementary subgroups
they turned out to be quite tricky, and we could get definitive answers
only over commutative rings, or, in some cases, only over Dedekind rings.
We discuss also some further related problems, such as the 
interrelations of various birelative commutator subgroups,  etc., 
and state several unsolved questions.
\end{abstract}

\maketitle


\section{Introduction}

Let $F,H\le G$ be two subgroups of $G$. We consider the centraliser 
of $F$ modulo $H$
$$ C_{G}(F,H)=\big\{ g\in G \mid \forall f\in F, [f,g]\in H\big\}. $$
\noindent
If $H\unlhd G$ is a normal subgroup, and $\pi_H:G\map G/H$ is
the corresponding projection, then 
$$ C_{G}(F,H)=\pi_H^{-1}(C_{G/H}(FH/H)). $$
\noindent
is the preimage of the corresponding absolute centraliser in the 
factor-group $G/H$.
\par
In the present paper, we are interested in the case of the general 
linear group $G=\GL(n, R)$ of degree $n\ge 3$ over an associative 
ring $R$ with 1. 
In connection with our project on subgroups normalised by unrelative 
and relative elementary subgroups $E(n,J)$ and $E(n,R,J)$ (see
\cite{NV19rr} for further references), we had to compute the 
centralisers of relative subgroups modulo some other relative subgroups.
\par
We could not find the corresponding results in the available literature.
In fact, in many cases such similar centralisers were extensively studied,
starting with Bass' classical results on the structure of $\GL(n,R)$
in the stable range \cite{Bass_stable}. However, Hyman Bass himself 
and his followers only considered the absolute case, where the 
subgroups $F$ was either perfect itself, or contained a perfect subgroup 
of the same level.
\par
Zenon Borewicz, the first author, and their schools, have  performed 
diverse calculations in this spirit, in connection with structure theory, 
and description of various classes of intermediate subgroups, see, for
instance, \cite{borvav,VPEp,VPEO,VG,VN,VLE6,VLE7,AVS}, etc. However,
in most of these calculations the subgroup $F$ was perfect as well.
\par
Here, we calculate some of these centralisers for the case where 
$G=\GL(n,R)$, whereas {\it both\/} $F$ and $H$ are various relative subgroups 
of $G$, corresponding to proper ideals, such as $E(n,I)$, $E(n,R,I)$, 
$\GL(n,R,I)$, or the like. None of these groups is anywhere close to
being perfect, so that our calculations here are quite different in spirit 
from the calculations in the above papers.
\par
For the case of congruence subgroups 
the corresponding results are mostly exercises in linear algebra, and
hold over arbitrary associative rings. But for elementary subgroups the
answers crucially depend on difficult results of commutator calculus developed
in our joint works with Roozbeh Hazrat and Alexei Stepanov, and then
recently by ourselves, and only hold in modified forms, or under 
miscellaneous assumptions.
\par
The paper is organised as follows. In \S~2 we recall some ideal arithmetic, 
and in \S~3 we collect the definitions of various relative subgroups. After that
in \S~4 we prove our first main result, Theorem 1, which calculates
$C_{GL(n,R)}(E(n,A),\GL(n,R,B))$, over arbitrary associative rings.
In \S~5 we recall the requisite facts on generation of relative elementary
subgroups and their commutators. After that, in \S~6 we explore what can be
done in this spirit for $C_{GL(n,R)}(E(n,A),E(n,R,B))$, over commutative
rings, and prove our second main result, Theorem 2. In particular, it gives
the definitive answer for Dedekind rings of arithmetic type, Theorem 3. 
Next, in \S\S~7 and~8 we adress two closely related problems on intersections 
of relative elementary subgroups, and on rewriting a relative commutator of elementary subgroups in terms of 
different ideals. Finally, in \S~9 we state some further unsolved problems.


\section{Ideal quotient}

Let, as above, $R$ be an associative but not necessarily 
commutative ring with 1. The results of this paper heavily
depend on the operations on ideals of $R$. For two-sided ideals
$A,B\unlhd R$ their sum $A+B$, their intersection $A\cap B$, their
products $AB$ and $BA$, their symmetrised product $A\circ B=
AB+BA$, and their commutator $[A,B]$ are again two sided ideals, 
and their properties are classically known. However, in the 
non-commutative case 
we could not find an authoritative source on ideal quotient 
$(B:A)$, so in this section we collect the some basic facts used 
in the sequel.
\par
For two {\it left\/} ideals
$A$ and $B$ in $R$ we consider their {\it right ideal quotient\/}
$$ BA^{-1}=\{x\in R\mid xA\subseteq B\}. $$
\noindent
Obviously, it is a two sided ideal of $R$. Indeed, for any $y,z\in R$ 
one has $(xy)A=x(yA)\le xA\le B$ and $(zx)A=z(xA)\le zB\le B$.
\par
Similarly, for two {\it right\/} ideals $A$ and $B$ their 
{\it left ideal quotient\/}
$$ A^{-1}B=\{x\in R\mid Ax\subseteq B\}, $$
\noindent
is a two sided ideal of $R$. 
\par\smallskip
\noindent
{\bf Warning.} In many texts the {\it right\/} ideal quotient $BA^{-1}$ 
of two left ideals $A$ and $B$ is called {\it left\/} ideal quotient, 
and is denoted by $(B:A)_L$ or $(B:_LA)$, see \cite{Steinfeld}, for
instance. The most amazing notational convention
is adopted in \cite{BGTV}. There, the {\it right\/} ideal quotient
 $BA^{-1}$ is denoted by ${}_R(B:A)$ --- and is still called
 {\it left\/} ideal quotient. Our notation follows that of \cite{Murdoch,Marubayashi}.
\par\smallskip
Now, for two sided ideals $A$ and $B$ of $R$ their {\it ideal 
quotient\/} is defined as
$$ (B:A)=BA^{-1}\cap A^{-1}B=
\{x\in R\mid xA,Ax\subseteq B\}, $$
\noindent
Clearly, $(B:A)$ is a two sided ideal such that $(B:A)\ge B$, 
and $A\le B$ implies that $(B:A)=R$. In particular $(A:A)=R$ 
and $(A:R)=A$. 
For commutative rings
$(B:A)=BA^{-1}=A^{-1}B$ coincides with the usual ideal
quotient in commutative algebra. 
\par
Let us list some obvious properties of the ideal quotient.
\par\smallskip
$\bullet$ Clearly,
$$ (B:A)\circ A=A(B:A)+(B:A)A\le A(A^{-1}B)+(BA^{-1})A\le B, $$ 
\noindent
thus, $(B:A)$ can be defined as the largest two sided ideal 
$C\unlhd R$ such that $C\circ A\le B$. However, only very rarely
this inclusion becomes an equality.
\par\smallskip
\noindent
{\bf Warning.} The ideal quotient is not a fractional ideal, and
even when $A\ge B$ the ideal quotient $(B:A)$ should not be
interpreted as the {\it fraction\/} of $B$ by $A$. In fact, among
commutative domains the equality $A(B:A)=B$ characterises
Dedekind domains, in this case $(B:A)$ is indeed $BA^{-1}$
in the group of fractional ideals of $R$. The same condition 
imposed on finitely generated ideals characterises Prüfer domains.
\par\smallskip
$\bullet$ 
$(A:(B+C))=(A:B)\cap (A:C)$.
\par\smallskip
Clearly, this equality implies that $(A:(A+B))=(A:A)\cap (A:B)=(A:B)$.
In other words, every ideal quotient $(A:B)$ coincides with such an ideal 
quotient that $A\le B$.
\par\smallskip
$\bullet$ 
$((A\cap B):C)=(A:C)\cap (B:C)$.
\par\smallskip
$\bullet$ Intersection of any two of the ideals 
$$ ((A:B):C),\qquad (A:(B\circ C)),\qquad ((A:C):B) $$ 
\noindent
is contained in the third one.
In particular, when $R$ is commutative, one has 
$$ ((A:B):C)=(A:(BC))=((A:C):B). $$
\par\smallskip
$\bullet$ 
$((A+B):C)\ge (A:C)+(B:C)$.
\par\smallskip
$\bullet$ 
$(A:(B\cap C))\ge (A:B)+(A:C)$.
\par\smallskip
\noindent
{\bf Warning.} Only very rarely these last inequalities become
equalities. Again, among commutative domains any of the equalities
$((A+B):C)=(A:C)+(B:C)$ or $(A:(B\cap C))=(A:B)+(A:C)$ characterises
Dedekind domains. Any of these conditions 
imposed on finitely generated ideals characterises Prüfer domains.
\par\smallskip
Now, let $Z\subseteq R$ be a subset. Its centraliser
$$ \Cent_R(Z)=\{x\in R\mid \forall z\in Z, xz-zx=0\} $$ 
\noindent
is a unital subring of $R$ containing the centre $\Cent(R)=\Cent_R(R)$. 
In the next section we also encounter the {\it relative\/} centraliser of 
a subset $Z$ modulo an ideal $B$:
$$ \Cent_R(Z,B)=\{x\in R\mid \forall z\in Z, xz-zx\in B\}. $$ 
\noindent
Clearly, 
$$ \Cent_R(Z,B)=\rho_B^{-1}\big(\Cent_{R/B}\big(\rho_B(Z)\big)\big), $$
\noindent
is a unital subring of $R$ containing both the absolute centraliser 
$\Cent_R(Z)$, and the ideal $B$ itself, $\Cent_R(Z)+B\le \Cent_R(Z,B)$.
\par
Mostly, we consider relative centralisers of ideals. Let $Z=A\unlhd R$. Then 
$\rho_B(A)=(A+B)/B$ and for $x\in(B:A)$ and $a\in A$ one has $xa,ax\in B$, 
so that in fact even
$$ \Cent_R(A)+(B:A)\le\Cent_R(A,B). $$


\section{Relative subgroups}

For two subgroups $F,H\le G$, we denote by $[F,H]$ their mutual 
commutator subgroup spanned by all commutators $[f,h]$, where 
$f\in F$, $h\in H$. Observe that our commutators are always 
left-normed, $[x,y]=xyx^{-1}y^{-1}$. The double commutator 
$[[x,y],z]$ will be denoted simply by $[x,y,z]$. As usual, $C(G)$
denotes the centre of a group $G$, whereas $C_{\GL(n,R)}(H)$ denotes
the centraliser of a subgroup $H\le G$ in $G$. 

\par
 As usual, $e$ 
 denotes the identity matrix and $e_{ij}$ is a standard matrix unit. For  
 $\xi\in R$ and $1\le i\neq j\le n$, we denote by $t_{ij}(\xi)=e+\xi e_{ij}$, 
 we denote the corresponding [elementary] transvection. To any ideal 
 $I\unlhd R$ one associates the elementary subgroup 
 $$ E(n,I)=\big\langle t_{ij}(\xi),\ \xi\in I,\ 1\le i\neq j\le n\big\rangle, $$
 \noindent
generated by all elementary transvections of level $I$, and the 
 \textit{relative} elementary subgroup $E_I=E(n,R,I)$ of level $I$ is 
defined as the normal closure of $E(n, I)$ in the \textit{absolute} 
elementary subgroup $E=E(n,R)$. 

Further, consider the reduction homomorphism 
$\rho_I:\GL(n,R)\longrightarrow\GL(n,R/I)$ modulo $I$. 
\par\smallskip
$\bullet$ 
By definition,
the \textit{principal} congruence subgroup $\GL(n,R,I)$ is the
kernel of $\rho_I$. In other words, $\GL(n,R,I)$ consists of all matrices $g$ 
congruent to $e$ modulo $I$.
$$ \GL(n,R,I)=\big\{ g=(g_{ij})\in\GL(n,R)\mid 
g_{ij}\equiv\delta_{ij}\pamod{I}\big\}. $$
\par\smallskip
$\bullet$ 
In turn, the \textit{full} congruence 
subgroup $C(n,R,I)$ is the full preimage of the center of 
$\GL(n,R/I)$ with respect to $\rho_I$. In other words, $C(n,R,I)$ consists
of matrices, which become scalar modulo $I$, i.e.\ have the form 
$\lambda e$, where $\lambda$ is central modulo $I$, 
$\lambda\in\Cent(R/I)^*$.
\par\smallskip
We need also some of the less familiar congruence subgroups.
\par\smallskip
$\bullet$ 
The {\it brimming\/} congruence subgroup $G(n,R,I)$, which is the
full preimage of the diagonal subgroup $D(n,R/I)\le\GL(n,R/I)$. In the
terminology of Zenon Borewicz, $G(n,R,I)=G(\sigma)$ is the net 
subgroup corresponding to the $D$-net $\sigma=(\sigma_{ij})$, 
$1\le i\neq j\le n$, such that $\sigma_{ij}=I$ for all $i\neq j$, while
$\sigma_{ii}=R$ as they should be, for $D$-nets, see \cite{BV80,borvav}.
$$ G(n,R,I)=\big\{ g=(g_{ij})\in\GL(n,R)\mid 
g_{ij}\equiv 0\pamod{I}, i\neq j\big\}. $$
\par\smallskip
$\bullet$ For a subgroup $\Omega\le (R/I)^*$ we can define
$C_{\Omega}(n,R,I)$ consists of matrices which modulo $I$, 
have the form $\lambda e$, for some $\lambda\in\Omega$.
The largest one of those is the group consisting of all matrices that 
become (non-central!) homotheties modulo $I$, it corresponds to 
$\Omega=(R/I)^*$:
$$ C^*(n,R,I)=\big\{ g=(g_{ij})\in G(n,R,I)\mid 
g_{ii}\equiv g_{jj}\pamod{I}\big\}. $$
\par\smallskip
$\bullet$ But actually, we will be most interested in the following 
special case. Let $A,B\unlhd R$ be two ideals of $R$. We
consider the subgroup
$$ \Omega=\Omega(A,B)=\rho_{(B:A)/B}\big(\Cent_{R/B}((A+B)/B)\big)
\cap (R/(B:A))^* $$
\noindent
Let $G=\GL(n,R)$ and 
\begin{multline*}
C_{\Omega(A,B)}(n,R,(B:A))=\{g\in\GL(n,R)\mid g_{ij}, g_{ii}-g_{jj}\in (B:A)\\ 
\text{\ for\ } i\neq j, \text{\ and\ } g_{ii}\in\Cent_R(A,B)\}. 
\end{multline*}
\noindent
In other words, this group is defined in exactly the same way as the 
full congruence subgroup $C(n,R,(B:A))$, only that now instead of 
requiring that the diagonal entries of matrices become central modulo 
$(B:A)$, we impose a weaker condition that modulo $B$ they commute 
with elements of $A$. Of course, since $(B:A)\circ A\le B$, this condition
depends not on the entry itself, but only on its congruence class modulo
$(B:A)$, which secures correctness of this definition.
\par
In particular, when $R$ is commutative, 
$C_{\Omega(A,B)}(n,R,(B:A))=C(n,R,(B:A))$
is the usual full congruence subgroup of level $(B:A)$.


\section{Centralisers of $E(n,R,A)$ and $\GL(n,R,A)$, 
modulo $\GL(n,R,B)$}

Now we are all set to prove the first main result of the present paper.
Here we consider relative centralisers modulo the principal congruence
subgroups $\GL(n,R,B)$, which are {\it always\/} normal in $\GL(n,R)$,
which makes the analysis considerable 

\begin{theorem}
Let $R$ be an arbitrary associative ring with 1, $A,B\unlhd R$, and 
$n\ge 3$. Further, let $H\le\GL(n,R)$ be any subgroup such that 
$E(n,A)\le H\le\GL(n,R,A)$. Then 
$$ C_{\GL(n,R)}(H,\GL(n,R,B)) = C_{\Omega(A,B)}(n,R,(B:A)). $$
\end{theorem}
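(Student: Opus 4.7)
The plan is to exploit normality of $\GL(n,R,B)$ in $\GL(n,R)$ in order to pass to the quotient $\bar R=R/B$ and reduce the problem to an absolute centraliser computation in $\GL(n,\bar R)$, then combine a sandwich argument with two direct matrix computations.

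Since $E(n,A)\le H\le\GL(n,R,A)$, one has the chain
$$C_{\GL(n,R)}\big(\GL(n,R,A),\GL(n,R,B)\big)\le C_{\GL(n,R)}\big(H,\GL(n,R,B)\big)\le C_{\GL(n,R)}\big(E(n,A),\GL(n,R,B)\big),$$
so it is enough to show that the two outermost terms both coincide with $C_{\Omega(A,B)}(n,R,(B:A))$.

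For the inclusion $C_{\GL(n,R)}(E(n,A),\GL(n,R,B))\le C_{\Omega(A,B)}(n,R,(B:A))$, I would set $\bar R=R/B$ and $\bar A=(A+B)/B$, and observe that the centralising condition is equivalent to the matrix identity $\bar a e_{ij}\bar g=\bar g\bar a e_{ij}$ in $M_n(\bar R)$, for every $\bar a\in\bar A$ and every $i\ne j$. Comparing entries immediately yields three families of relations: (i) $\bar g_{ii}\bar a=\bar a\bar g_{jj}$, (ii) $\bar a\bar g_{jl}=0$ for $l\ne j$, and (iii) $\bar g_{ki}\bar a=0$ for $k\ne i$. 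Letting $(i,j)$ range over all ordered pairs of distinct indices (the hypothesis $n\ge 3$ being essential here, since one needs a third index to combine relations coming from different rows and columns), items (ii) and (iii) together show that every off-diagonal entry $g_{pq}$ is annihilated by $A$ on both sides, hence lies in $(B:A)$. Then (i), used for varying $i$ and $j$, forces $g_{ii}-g_{jj}\in(B:A)$ and, upon substituting this difference back into (i), also gives $\bar g_{ii}\bar a=\bar a\bar g_{ii}$ for every $\bar a\in\bar A$, i.e.\ $g_{ii}\in\Cent_R(A,B)$. This places $g$ in $C_{\Omega(A,B)}(n,R,(B:A))$.

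Conversely, for $C_{\Omega(A,B)}(n,R,(B:A))\le C_{\GL(n,R)}(\GL(n,R,A),\GL(n,R,B))$, I would take $g$ in the former and any $h=e+h'\in\GL(n,R,A)$ with $h'\in M_n(A)$. Writing $g=d+g'$ with $d$ diagonal and $g'$ off-diagonal, one expands $gh-hg=(dh'-h'd)+(g'h'-h'g')$. The second summand lies in $M_n(B)$ since the entries of $g'$ are in $(B:A)$ and $(B:A)\circ A\le B$. The $(i,j)$-entry of the first summand is $g_{ii}h'_{ij}-h'_{ij}g_{jj}$, which modulo $B$ equals $g_{ii}h'_{ij}-h'_{ij}g_{ii}$ (using $g_{ii}-g_{jj}\in(B:A)$ and $h'_{ij}\in A$), and the latter lies in $B$ because $g_{ii}\in\Cent_R(A,B)$. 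Hence $gh\equiv hg$ entrywise modulo $B$, whence $[g,h]-e\in M_n(B)$ and $[g,h]\in\GL(n,R,B)$. The only genuine subtlety is the combinatorial index-sweeping in the previous paragraph, which is precisely where $n\ge 3$ enters essentially; everything else is routine matrix bookkeeping.
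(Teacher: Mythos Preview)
Your proposal is correct and follows essentially the same approach as the paper's proof: the identical sandwich reduction, then comparing entries of $g\,t_{ij}(a)$ and $t_{ij}(a)\,g$ modulo $B$ (which you phrase as $\bar g\,\bar a e_{ij}=\bar a e_{ij}\,\bar g$ in $M_n(R/B)$) to obtain the off-diagonal and diagonal conditions, with $n\ge 3$ entering exactly where a third index is needed to compare diagonal entries; the converse inclusion is again the same entrywise verification of $gh\equiv hg\pmod B$, only organised via the decomposition $g=d+g'$, $h=e+h'$ rather than writing out the sums $(gh)_{rs}$, $(hg)_{rs}$ directly.
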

\begin{proof}
First, we calculate $C_{\GL(n,R)}(E(n,A),\GL(n,R,B))$.
Assume that $g\in\GL(n,R)$ commutes with $E(n,A)$ modulo $\GL(n,R,B)$.
In particular this means that for all $1\le r\neq s\le n$ and all $a\in A$ one has
$[g,t_{rs}(a)]\in\GL(n,R,B)$, so that 
$$ gt_{rs}(a)\equiv t_{rs}(a)g\pamod{B}. $$
\noindent
The left hand side only differs from $g$ in the $s$-th column, while the
right hand side only differs from $g$ in the $r$-th row.
Comparing the entries of these matrices in positions $(i,s),(r,j)\neq (r,s)$, we
see that
\par\smallskip
$\bullet$ $g_{ir}a\in B$, for all $i\neq r$, 
\par\smallskip
$\bullet$ $ag_{sj}\in B$, for all $j\neq s$, 
\par\smallskip\noindent
in particular, we can conclude that all non-diagonal entries of $g$ belong to $(B:A)$.
\par
It remains to compare the entries in the position $(r,s)$. 
\par\smallskip
$\bullet$ $g_{rr}a\equiv ag_{ss}\pamod{B}$, or, what is the same,
$g_{rr}a-ag_{ss}\in B$, for all $r\neq s$.
\par\smallskip\noindent
Now, since $n\ge 3$, we can choose an index $t\neq r,s$, and conclude
that $g_{rr}a-ag_{tt}\in B$. Comparing the above inclusions, we see that
$a(g_{ss}-g_{tt})\in B$, so that $g_{ss}-g_{tt}\in A^{-1}B$, for all $s\neq t$.
By the same token, $ag_{ss}-g_{tt}a\in B$, and again comparing the above
inclusions we see that $(g_{rr}-g_{tt})a\in B$, so that 
$g_{rr}-g_{tt}\in BA^{-1}$, for all $r\neq t$. This means that pairwise
differences of the diagonal entries of $g$ belong to $(B:A)$. But then the 
congruence in the last item implies that $g_{rr}a-ag_{rr}\in B$, for all $r$,
$1\le r\le n$, and all $a\in A$. Summarising the above, we see that
$$ C_{\GL(n,R)}(E(n,A),\GL(n,R,B))\le C_{\Omega(A,B)}(n,R,(B:A)). $$
\par
On the other hand,
\begin{multline*}
C_{\GL(n,R)}(\GL(n,R,A),\GL(n,R,B))\le C_{\GL(n,R)}(H,\GL(n,R,B))\le\\ 
C_{\GL(n,R)}(E(n,A),\GL(n,R,B)), 
\end{multline*}
\noindent
and to prove the theorem it only remains to verify that
$$ C_{\Omega(A,B)}(n,R,(B:A))\le C_{\GL(n,R)}(\GL(n,R,A),\GL(n,R,B)). $$
\noindent
Indeed, let $g=(g_{ij})\in C_{\Omega(A,B)}(n,R,(B:A))$ and $h\in\GL(n,R,A)$. We
claim that then $gh\equiv hg\pamod{B}$. Indeed, 
$$ (gh)_{rs}=\sum g_{rt}h_{ts},\qquad (gh)_{rs}=\sum g_{rt}h_{ts}, $$
\noindent
where both sums are taken over $1\le t\le n$. 
\par\smallskip
$\bullet$ First, let $r\neq s$. Then the summands corresponding to
$t\neq r,s$ belong to $(B:A)A\le  B$ on the left hand side, and to $A(B:A)\le B$
on the right hand side, and can be discarded. 
\par\smallskip
$\bullet$ This means that for the case $r\neq s$ it only remains to take care of the
summands corresponding to $t=r,s$. But since $r\neq s$ one has $h_{rs}\in A$,
so that $g_{rr}h_{rs}\equiv h_{rs}g_{ss}\pamod{B}$ by the very definition
of $C_{\Omega(A,B)}(n,R,(B:A))$. On the other hand,
$h_{rr}\equiv h_{ss}\equiv 1\pamod{A}$ and since $g_{rs}\in (B:A)$, also
$g_{rs}h_{ss}\equiv h_{rr}g_{rs}\pamod{B}$.
\par\smallskip
$\bullet$ By the same token, for the remaining case $r=s$ all summands 
$g_{rt}h_{tr}$ and $h_{rt}g_{tr}$ belong to $B$ and can be discarded.
On the other hand, $h_{rr}\equiv 1\pamod{A}$, and since 1 commutes with
$g_{rr}$, while elements of $A$ commute with $g_{rr}$ modulo $B$, one
has $g_{rr}h_{rr}\equiv h_{rr}g_{rr}\pamod{B}$, as claimed.
\par\smallskip
This proves the desired inclusion, and thus the theorem.
\end{proof}


\section{Generation of relative subgroups and commutator formulas}

In the present section we collect the requisite results on relative
elementary subgroups that will be used in the rest of this paper.

The following lemma on generation of relative elementary subgroups
$E(n,R,A)$ is a classical result discovered in various contexts by 
Stein, Tits and Vaserstein, see, for instance, \cite{Vaserstein_normal}
(or \cite{Hazrat_Vavilov_Zhang}, Lemma 3, for a complete elementary
proof). It is stated in terms of the {\it Stein---Tits---Vaserstein 
generators\/}):
$$ z_{ij}(a,c)=t_{ij}(c)t_{ji}(a)t_{ij}(-c),\qquad
1\le i\neq j\le n,\quad a\in A,\quad c\in R. $$

\begin{lemma}
Let $R$ be an associative ring with $1$, $n\ge 3$, and let $A$ 
be a two-sided ideal of $R$. Then as a subgroup $E(n,R,A)$ is 
generated by $z_{ij}(a,c)$, for all $1\le i\neq j\le n$, $a\in A$, 
$c\in R$.
\end{lemma}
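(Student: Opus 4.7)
The plan is to show that the subgroup $Z := \langle z_{ij}(a,c) \mid i \neq j,\ a \in A,\ c \in R \rangle$ coincides with $E(n,R,A)$. The inclusion $Z \le E(n,R,A)$ is immediate from the definition: each $z_{ij}(a,c)$ is the conjugate of $t_{ji}(a) \in E(n,A)$ by $t_{ij}(c) \in E(n,R)$, hence lies in the normal closure of $E(n,A)$ in $E(n,R)$, which is $E(n,R,A)$ by definition. For the reverse inclusion, I would observe that $Z \supseteq E(n,A)$ since $z_{ij}(a,0) = t_{ji}(a)$, so by the definition of normal closure it suffices to verify that $Z$ is normalised by $E(n,R)$. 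Since $E(n,R)$ is generated by the transvections $t_{kl}(r)$ with $k\neq l$ and $r \in R$, the whole task reduces to showing that $t_{kl}(r)\, z_{ij}(a,c)\, t_{kl}(-r) \in Z$ for all admissible choices of indices and ring elements.

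I would carry out this last step by case analysis on the intersection pattern of the ordered pairs $(i,j)$ and $(k,l)$. The easy case $(k,l)=(i,j)$ is handled by direct expansion, since conjugation by $t_{ij}(r)$ is absorbed into the outer factors:
$$t_{ij}(r)\, z_{ij}(a,c)\, t_{ij}(-r)
 = t_{ij}(r+c)\, t_{ji}(a)\, t_{ij}(-c-r)
 = z_{ij}(a,\,c+r) \in Z.$$
The cases where $\{i,j\}$ and $\{k,l\}$ are disjoint (possible when $n\ge 4$) or share a single index in a ``safe'' position are handled by applying the Chevalley-type commutator identities $[t_{pq}(x), t_{qr}(y)] = t_{pr}(xy)$ (for $p,q,r$ distinct) and $[t_{pq}(x), t_{rs}(y)] = e$ (for $q\neq r,\, p\neq s$) directly to the product $t_{kl}(r)\, t_{ij}(c)\, t_{ji}(a)\, t_{ij}(-c)\, t_{kl}(-r)$, and regrouping so that every extra commutator factor picks up an entry from $A$ and is therefore either a $t_{pq}(a')$ with $a'\in A$ (which lies in $Z$) or can be absorbed into a new $z$-generator.

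The main obstacle is the ``reversed pair'' case $(k,l)=(j,i)$, and more generally the subcases in which $k$ or $l$ coincides with $j$ (the first index of $t_{ji}(a)$). Here a naive expansion does not obviously land in $Z$. The trick, which is where the assumption $n\ge 3$ is essential, is to pick an auxiliary index $m \notin \{i,j\}$ and rewrite
$$t_{ji}(a) \;=\; [t_{jm}(1),\, t_{mi}(a)]
 \;=\; t_{jm}(1)\, t_{mi}(a)\, t_{jm}(-1)\, t_{mi}(-a),$$
so that $z_{ij}(a,c)$ itself is expressed as a product of conjugates of $t_{mi}(a)$ and $t_{mi}(-a)$ (both in $E(n,A)$) by transvections of the form $t_{ij}(c)$ and $t_{jm}(\pm 1)$. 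Once $z_{ij}(a,c)$ is written in this form, conjugation by $t_{kl}(r)$ can be pushed inside, and every transvection that it hits either involves a row/column index outside $\{i,j,k,l\}$ (by the choice of $m$) or carries an $A$-level coefficient, so that the Chevalley relations produce only factors that are manifestly in $Z$.

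Putting the cases together gives $t_{kl}(r)\, Z\, t_{kl}(-r) \subseteq Z$ for every transvection $t_{kl}(r)$, hence $Z$ is normal in $E(n,R)$; since it contains $E(n,A)$ it contains the normal closure, and therefore $Z = E(n,R,A)$ as required. The bulk of the write-up will be the systematic case analysis in the last paragraph above; the only genuinely delicate point is the reversed-pair case, which is unlocked by the three-index rewriting trick enabled by $n\ge 3$.
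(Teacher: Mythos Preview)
The paper does not supply its own proof of this lemma; it merely cites Vaserstein and the survey \cite{Hazrat_Vavilov_Zhang} for a complete elementary argument. Your overall strategy --- show that $Z$ contains $E(n,A)$, then verify that $Z$ is normalised by every elementary transvection $t_{kl}(r)$ via a case analysis on how $\{k,l\}$ meets $\{i,j\}$ --- is exactly the standard one used in those references, so in spirit your proposal matches what the paper points to.

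There is, however, a genuine slip in your handling of the reversed-pair case $(k,l)=(j,i)$. You propose to rewrite the \emph{inner} factor $t_{ji}(a)=[t_{jm}(1),t_{mi}(a)]$ and then claim that, in the resulting expression for $z_{ij}(a,c)$, ``every transvection that $t_{ji}(r)$ hits either involves an index outside $\{i,j\}$ or carries an $A$-level coefficient''. This is false: the outer conjugating factors $t_{ij}(\pm c)$ survive the rewriting, they have both indices in $\{i,j\}$, and $c\notin A$ in general. So when you try to push $t_{ji}(r)$ past $t_{ij}(c)$ you are right back at the bad $2\times 2$ commutator $[t_{ji}(r),t_{ij}(c)]$ that you were trying to avoid.

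The standard repair is to apply the third-index trick to the \emph{conjugating} element instead: write $t_{ji}(r)=[t_{jm}(r),t_{mi}(1)]$ for some $m\neq i,j$, so that conjugation by $t_{ji}(r)$ becomes successive conjugation by $t_{jm}(\pm r)$ and $t_{mi}(\pm 1)$, each of which falls under your already-handled single-overlap case for the original generator $z_{ij}(a,c)$. One then has to check there is no hidden circularity: the single-overlap conjugations produce, besides $z_{ij}(a,c)$ itself, only level-$A$ transvections $t_{pq}(a')$ with $a'\in A$, and for those the reversed-pair conjugation ${}^{t_{qp}(s)}t_{pq}(a')=z_{qp}(a',s)$ is immediately a $z$-generator. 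With this adjustment the argument closes; your sketch as written does not.
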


In the following theorem a further type of generators occur, the
{\it elementary commutators\/}:
$$ y_{ij}(a,b)=[t_{ij}(a),t_{ji}(b)],\qquad
1\le i\neq j\le n,\quad a\in A,\quad b\in B. $$

The following analogue of Lemma~1 for commutators 
$[E(n,R,A),E(n,R,B)]$ was discovered (in slightly less precise 
forms) by Roozbeh Hazrat and the second author, see 
\cite{Hazrat_Zhang_multiple}, Lemma 12 and then in our joint 
paper with Hazrat \cite{Hazrat_Vavilov_Zhang}, Theorem~3A. 
The strong form reproduced below is established only in our
paper \cite{NZ2}, Theorem~1 (see also \cite{NZ3}), as a 
spin-off of our papers \cite{NV18, NZ1}.

\begin{lemma}
Let $R$ be any associative ring with $1$, let $n\ge 3$, and let $A,B$ 
be two-sided ideals of $R$. Then the mixed commutator subgroup 
$[E(n,R,A),E(n,R,B)]$ is generated as a group by the elements of the form
\par\smallskip
$\bullet$ $z_{ij}(ab,c)$ and $z_{ij}(ba,c)$,
\par\smallskip
$\bullet$ $y_{ij}(a,b)$,
\par\smallskip\noindent
where $1\le i\neq j\le n$, $a\in A$, $b\in B$, $c\in R$. Moreover,
for the second type of generators, it suffices to fix one pair of
indices $(i,j)$.
\end{lemma}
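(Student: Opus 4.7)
The plan is to reduce, via Lemma~1 and commutator calculus, the problem to analysing single commutators $[z_{ij}(a,c), z_{kl}(b,d)]$ of two Stein---Tits---Vaserstein generators. Any element of $[E(n,R,A), E(n,R,B)]$ has the form $[x,y]$ where $x$ is a word in generators $z_{ij}(a,c)^{\pm 1}$ with $a\in A$ and $y$ is a word in generators $z_{kl}(b,d)^{\pm 1}$ with $b\in B$. Iterated application of the identities $[uv,w]={}^u[v,w]\cdot[u,w]$ and $[u,vw]=[u,v]\cdot{}^v[u,w]$ rewrites $[x,y]$ as a product of $\big(E(n,R,A)\cdot E(n,R,B)\big)$-conjugates of commutators of single $z$-generators and their inverses. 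Crucially, the whole subgroup $[E(n,R,A),E(n,R,B)]$ is normal in $E(n,R)$ since both factors are, so these conjugating factors can be absorbed at the end.

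Let $X\le\GL(n,R)$ denote the subgroup generated by the elements listed in the statement. The inclusion $X\le [E(n,R,A), E(n,R,B)]$ is easy: the Chevalley relation $[t_{jk}(a),t_{ki}(b)]=t_{ji}(ab)$, with $k$ a third index available since $n\ge 3$, places $t_{ji}(ab)\in [E(n,A),E(n,B)]\le [E(n,R,A),E(n,R,B)]$, and conjugation by $t_{ij}(c)\in E(n,R)$ preserves the mixed commutator subgroup by the normality just noted; the same for $z_{ij}(ba,c)$, while $y_{ij}(a,b)$ is a commutator of the required form tautologically.

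For the reverse inclusion, I would expand $z_{ij}(a,c)=t_{ij}(c)t_{ji}(a)t_{ij}(-c)$ in both factors and use commutator calculus to reduce each $[z_{ij}(a,c),z_{kl}(b,d)]$ to a product of $E(n,R)$-conjugates of commutators $[t_{pq}(u),t_{rs}(v)]$ with $u\in A$, $v\in B$. A case analysis on the index configuration $\{p,q\}$ versus $\{r,s\}$, via the Chevalley relations, shows that in every configuration except the direct overlap $\{p,q\}=\{r,s\}$ the commutator is either trivial or a single elementary transvection with entry in $AB+BA$, which lies in $E(n,R,AB+BA)\le X$ by Lemma~1 applied to the ideal $AB+BA$. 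In the overlap configuration one picks up a $y_{ij}(a,b)$-contribution, which lies in $X$ by definition. To absorb the conjugating factors produced by commutator expansion, one must verify that $X$ itself is normal in $E(n,R)$: for the $z$-type generators this is automatic, and for the $y$-type generators one computes $[t_{pq}(r),y_{ij}(a,b)]$ by Chevalley relations and observes that the result is a product of $z$-type generators of level $AB+BA$, hence lies in $X$.

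Finally, for the ``moreover'' assertion fixing one pair of indices, I would show by explicit computation that conjugation of $y_{ij}(a,b)$ by Weyl-type elementary products transports it to $y_{kl}(a,b)$ modulo $E(n,R,AB+BA)\le X$; thus after fixing a single pair $(i,j)$ all other $y_{kl}(a,b)$ become redundant as generators of $X$. The main obstacle throughout is the bookkeeping in the case analysis of $[z_{ij}(a,c),z_{kl}(b,d)]$ and in the parallel verification that $X$ is $E(n,R)$-normal: both are purely mechanical via the Chevalley commutator relations, but unavoidably long in the non-commutative setting, which is precisely why, as the authors note, the strong form used here required several iterations across the cited papers.
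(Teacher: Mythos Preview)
The paper does not prove this lemma at all; it is quoted verbatim from \cite{NZ2}, Theorem~1 (with the history traced through \cite{Hazrat_Zhang_multiple} and \cite{Hazrat_Vavilov_Zhang}). So there is no ``paper's own proof'' to compare against --- only the proofs in the cited sources.

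Your outline matches the architecture of those proofs: reduce via Lemma~1 to commutators of $z$-generators, prove the candidate subgroup $X$ is $E(n,R)$-normal so that conjugates can be absorbed, and handle the ``moreover'' by moving $y_{ij}$ to $y_{kl}$ modulo $E(n,R,A\circ B)$ (this last step is exactly Lemma~6 here, i.e.\ \cite{NZ2}, Lemma~5). One small slip: not every element of $[E(n,R,A),E(n,R,B)]$ is a single commutator $[x,y]$; it is a \emph{product} of such, though your argument handles that anyway.

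There is, however, one genuine soft spot. You write that you would ``expand $z_{ij}(a,c)=t_{ij}(c)t_{ji}(a)t_{ij}(-c)$ in both factors'' and thereby reduce to $E(n,R)$-conjugates of $[t_{pq}(u),t_{rs}(v)]$ with $u\in A$, $v\in B$. A literal expansion of $[z_{ij}(a,c),z_{kl}(b,d)]$ into nine basic commutators produces terms like $[t_{ij}(\pm c),t_{kl}(\pm d)]$ with $c,d\in R$, which are \emph{not} of the required form and do not lie in $X$ individually; the cancellation between the $+c$ and $-c$ contributions is obscured by the intervening conjugation by $t_{ji}(a)$. The way the cited proofs avoid this is to treat $z_{ij}(a,c)$ as ${}^{t_{ij}(c)}t_{ji}(a)$ and use the identity $[{}^{h}x,g]={}^{h}[x,{}^{h^{-1}}g]$, so that the $R$-level factor $t_{ij}(c)$ is peeled off as an outer conjugator and never enters the commutator. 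After this move one is indeed left with $[t_{ji}(a),g']$ for some $g'\in E(n,R,B)$, and the case analysis you describe goes through. Your sketch is repairable with exactly this adjustment, but as written the ``expand in both factors'' step would not close.
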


In the proofs below we use not just Lemma 2 itself, but also some 
of the results used in its proof. The first of them is standard, see,
for instance, \cite{NVAS,NVAS2,Hazrat_Vavilov_Zhang} and
references there.

\begin{lemma}
$R$ be an associative ring with $1$, $n\ge 3$, and let $A$ and $B$
be two-sided ideals of $R$.  Then 
$$ E(n,R,A\circ B)\le\big[E(n,A),E(n,B)\big]\le
\big[E(n,R,A),E(n,R,B)\big] \le\GL(n,R,A\circ B). $$
\end{lemma}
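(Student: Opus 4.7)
The plan is to establish the three inclusions separately; only the leftmost one requires substantive work. For the middle inclusion $[E(n,A),E(n,B)]\le[E(n,R,A),E(n,R,B)]$, monotonicity of the commutator and the inclusions $E(n,A)\le E(n,R,A)$, $E(n,B)\le E(n,R,B)$ suffice. For the rightmost inclusion, the containments $E(n,R,A)\le\GL(n,R,A)$ and $E(n,R,B)\le\GL(n,R,B)$ reduce the task to verifying $[\GL(n,R,A),\GL(n,R,B)]\le\GL(n,R,A\circ B)$; writing $x=e+\alpha$ and $y=e+\beta$ with $\alpha_{ij}\in A$ and $\beta_{ij}\in B$, one computes $xy-yx=\alpha\beta-\beta\alpha$, whose entries lie in $AB+BA=A\circ B$, so the images of $x$ and $y$ in $\GL(n,R/(A\circ B))$ commute and $[x,y]\in\GL(n,R,A\circ B)$.

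For the leftmost inclusion $E(n,R,A\circ B)\le[E(n,A),E(n,B)]$, Lemma~1 reduces the task to showing that each Stein--Tits--Vaserstein generator $z_{ij}(c,d)=t_{ij}(d)\,t_{ji}(c)\,t_{ij}(-d)$ with $c\in A\circ B$ and $d\in R$ lies in $[E(n,A),E(n,B)]$. Since conjugation distributes over products, and every element of $A\circ B=AB+BA$ is a finite sum of products of shapes $ab$ and $ba$, it is enough to treat $c=ab$ (the case $c=ba$ being symmetric). Picking a third index $k\ne i,j$ (available because $n\ge 3$) and applying the Chevalley commutator formula, I would write $t_{ji}(ab)=[t_{jk}(a),t_{ki}(b)]$, whence
$$ z_{ij}(ab,d)=\big[t_{ij}(d)\,t_{jk}(a)\,t_{ij}(-d),\,t_{ij}(d)\,t_{ki}(b)\,t_{ij}(-d)\big]. $$
A second application of the Chevalley formula rewrites the first inner conjugate as $t_{ik}(da)\,t_{jk}(a)\in E(n,A)$ and the second as $t_{kj}(-bd)\,t_{ki}(b)\in E(n,B)$, which places $z_{ij}(ab,d)$ in $[E(n,A),E(n,B)]$ as required.

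The main obstacle is precisely this last step. What makes it work is the availability of a mediating third index $k$ disjoint from both $i$ and $j$: this is where the hypothesis $n\ge 3$ is used, both to split $t_{ji}(ab)$ as a Chevalley commutator and, crucially, to ensure that the subsequent conjugation by $t_{ij}(d)$ sidesteps the genuinely non-elementary $\SL_2$-type case, since the pairs $\{i,j\}$, $\{j,k\}$, $\{k,i\}$ are pairwise distinct. Two-sidedness of $A$ and $B$ enters through the identifications $da\in A$ and $bd\in B$, which keep the resulting transvections in $E(n,A)$ and $E(n,B)$ respectively; without it, the inner conjugates would only be congruence-type matrices rather than products of elementary transvections with ideal entries.
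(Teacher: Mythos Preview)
Your argument is correct in all three parts. The middle inclusion is trivial; the rightmost inclusion follows exactly as you say, since $xy-yx=\alpha\beta-\beta\alpha$ has entries in $A\circ B$ and hence the images of $x$ and $y$ commute in $\GL(n,R/(A\circ B))$; and for the leftmost inclusion your Chevalley-commutator computation
\[
z_{ij}(ab,d)=\bigl[t_{ik}(da)\,t_{jk}(a),\ t_{kj}(-bd)\,t_{ki}(b)\bigr]\in[E(n,A),E(n,B)]
\]
is accurate, with the third index $k$ supplied by the hypothesis $n\ge 3$ and two-sidedness used exactly where you indicate.

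As to comparison: the paper does not supply its own proof of this lemma. It records the statement as ``standard'' and refers the reader to \cite{NVAS,NVAS2,Hazrat_Vavilov_Zhang} for the argument. Your write-up is essentially the direct proof one finds in those sources, so there is no genuine difference in route to discuss---you have simply made explicit what the paper leaves to citation.
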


The first of the following lemmas is \cite{NZ2}, Lemma 3, or \cite{NZ3}, 
Lemma 9. The second is \cite{NZ3}, Lemma 10. And the third one is
\cite{NZ2}, Lemma 5, or \cite{NZ3}, Lemma 11. 

\begin{lemma}
Let $R$ be an associative ring with $1$, $n\ge 3$, and let $A,B$ 
be two-sided ideals of $R$. Then for any  $1\le i\neq j\le n$, 
$a\in A$, $b\in B$, and any $x\in E(n,R)$ one has
$$ {}^x y_{ij}(a,b)\equiv  y_{ij}(a,b) \pamod{E(n,R,A\circ B)}. $$
\end{lemma}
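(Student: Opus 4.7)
The target congruence ${}^x y_{ij}(a,b) \equiv y_{ij}(a,b) \pamod{E(n,R,A\circ B)}$ is equivalent to the commutator membership $[x, y_{ij}(a,b)] \in E(n,R,A\circ B)$ for every $x \in E(n,R)$. The plan is to reduce to the case where $x$ is an elementary transvection and then to carry out a short case analysis on how its indices overlap with $\{i,j\}$.

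First I would verify that the set
$$ S \,=\, \big\{x \in E(n,R) \mid [x, y_{ij}(a,b)] \in E(n,R,A\circ B)\big\} $$
is a subgroup of $E(n,R)$. The identity $[x_1 x_2, y] = {}^{x_1}[x_2, y]\cdot [x_1, y]$, together with the normality $E(n,R,A\circ B)\unlhd E(n,R)$ (which holds by definition, since $E(n,R,A\circ B)$ is the normal closure of $E(n,A\circ B)$ in $E(n,R)$), gives closure under products, and $[x^{-1},y]={}^{x^{-1}}\!([x,y]^{-1})$ handles inverses. Since $E(n,R)$ is generated by transvections $t_{kl}(c)$, it is enough to show $t_{kl}(c)\in S$ for all $1\le k\ne l\le n$ and all $c\in R$.

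Next I would split into cases according to $|\{k,l\}\cap\{i,j\}|$. If the intersection is empty, $t_{kl}(c)$ commutes with both $t_{ij}(a)$ and $t_{ji}(b)$, hence with $y_{ij}(a,b)$, and there is nothing to prove. If the intersection has size one, the Chevalley commutator formula expresses $[t_{kl}(c), t_{ij}(a)]$ and $[t_{kl}(c), t_{ji}(b)]$ as single elementary transvections with entries in $cA$, $Ac$, $cB$ or $Bc$; expanding $[t_{kl}(c), y_{ij}(a,b)]$ by the Hall--Witt identity and the standard $[x,[y,z]]$ rewrites then produces a product of elementary transvections whose entries lie in $A\circ B$, all of which belong to $E(n, A\circ B) \subseteq E(n,R,A\circ B)$ by Lemma 3.

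The main obstacle is the remaining case $\{k,l\}=\{i,j\}$, where the Chevalley formula gives nothing directly and $t_{kl}(c)$ sits inside the same rank-one root subgroup pair as the two transvections generating $y_{ij}(a,b)$. Here I would invoke $n\ge 3$ to pick a third index $m\notin\{i,j\}$ and factor $t_{ij}(a)=[t_{im}(a),t_{mj}(1)]$ (or, symmetrically, $t_{ji}(b)=[t_{jm}(b),t_{mi}(1)]$), turning $y_{ij}(a,b)$ into a nested commutator of transvections whose index sets only partially overlap with $\{k,l\}$, thereby reducing to the partial-overlap case already handled. Throughout, the real bookkeeping burden will be to verify that every transvection that arises has its entry genuinely in $A\circ B$, so that the answer lands in the narrow subgroup $E(n,R,A\circ B)$ and not merely in the larger $[E(n,R,A),E(n,R,B)]$ provided by Lemma 3.
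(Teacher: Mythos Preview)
The paper does not prove this lemma at all; it merely records it as a citation to \cite{NZ2}, Lemma~3, and \cite{NZ3}, Lemma~9. So there is no ``paper's own proof'' to compare against, and your sketch has to be judged on its own merits.

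Your reduction to elementary generators via the subgroup argument is correct and is exactly the right opening move. The disjoint case is fine. In the size-$1$ case your conclusion is correct, though the appeal to the Hall--Witt identity is cosmetic: what one actually does is expand ${}^{t_{kl}(c)}y_{ij}(a,b)=[{}^{t_{kl}(c)}t_{ij}(a),{}^{t_{kl}(c)}t_{ji}(b)]$ and unwind with $[xy,z]={}^{x}[y,z]\cdot[x,z]$ and $[x,yz]=[x,y]\cdot{}^{y}[x,z]$. One typically has to iterate once (the first new transvection has entry only in $A$ or $B$; at the next step the entries fall into $AB$ or $BA$), so the bookkeeping caveat you raise is real, but the outcome is as you claim.

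The genuine gap is in the size-$2$ case. Rewriting $y_{ij}(a,b)$ via a third index $m$ does \emph{not} reduce to the case already handled: your ``already handled'' case is about conjugating $y_{ij}(a,b)$ by a transvection with partial overlap, whereas after your rewriting you are still conjugating a complicated word by $t_{ij}(c)$, and expanding $[t_{ij}(c),\,\text{word}]$ produces individual commutators like $[t_{ij}(c),t_{mi}(1)]=t_{mj}(-c)$ whose entries lie only in $R$, so nothing forces the pieces into $E(n,R,A\circ B)$ without delicate cancellations you have not supplied. The clean fix is to factor the \emph{conjugating} element instead: write
\[
t_{ij}(c)=[t_{im}(c),t_{mj}(1)]=t_{im}(c)\,t_{mj}(1)\,t_{im}(-c)\,t_{mj}(-1),
\]
a product of four transvections each having size-$1$ overlap with $\{i,j\}$; by the size-$1$ case each factor lies in your subgroup $S$, hence so does $t_{ij}(c)$ (and likewise $t_{ji}(c)$). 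With this correction the argument goes through.
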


\begin{lemma}
Let $R$ be an associative ring with $1$, $n\ge 3$, and let $A,B$ 
be two-sided ideals of $R$. Then for any  $1\le i\neq j\le n$, 
$a,a_1,a_2\in A$, $b,b_1,b_2\in B$ one has
\begin{align*}
&y_{ij}(a_1+a_2,b)\equiv  y_{ij}(a_1,b)\cdot y_{ij}(a_1,b) 
\pamod{E(n,R,A\circ B)},\\
&y_{ij}(a,b_1+b_2)\equiv  y_{ij}(a,b_1)\cdot y_{ij}(a,b_2) 
\pamod{E(n,R,A\circ B)},\\
&y_{ij}(a,b)^{-1}\equiv  y_{ij}(-a,b)\equiv y_{ij}(a,-b) 
\pamod{E(n,R,A\circ B)},\\
&y_{ij}(ab_1,b_2)\equiv y_{ij}(a_1,a_2b)\equiv e
\pamod{E(n,R,A\circ B)}.
\end{align*}
\end{lemma}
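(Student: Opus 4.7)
The plan is to derive all four congruences from the elementary commutator identities $[xy,z]={}^x[y,z]\cdot [x,z]$ and $[x,yz]=[x,y]\cdot {}^y[x,z]$, and then strip the residual conjugation using Lemma~4, which guarantees that $y_{ij}(a,b)$ is invariant modulo $E(n,R,A\circ B)$ under conjugation by any element of $E(n,R)$. The last (trivialisation) identity is qualitatively different and will follow instead from the normality of $E(n,R,A\circ B)$ in $E(n,R)$.

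For the first congruence, I would factor $t_{ij}(a_1+a_2)=t_{ij}(a_1)t_{ij}(a_2)$ and expand
\[ y_{ij}(a_1+a_2,b)={}^{t_{ij}(a_1)}y_{ij}(a_2,b)\cdot y_{ij}(a_1,b). \]
Since $t_{ij}(a_1)\in E(n,R)$, Lemma~4 replaces the first factor by $y_{ij}(a_2,b)$ modulo $E(n,R,A\circ B)$. To obtain the ordering stated in the lemma, I would repeat the computation with the alternative factorisation $t_{ij}(a_1+a_2)=t_{ij}(a_2)t_{ij}(a_1)$; a byproduct is that $y_{ij}(a_1,b)$ and $y_{ij}(a_2,b)$ commute modulo $E(n,R,A\circ B)$. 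Bi-additivity in the second argument is perfectly symmetric, invoking $[x,yz]=[x,y]\cdot {}^y[x,z]$ with $x=t_{ij}(a)$, $y=t_{ji}(b_1)$, $z=t_{ji}(b_2)$, and again applying Lemma~4 to conjugation by $t_{ji}(b_1)\in E(n,R)$.

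The inverse congruence is then an immediate specialisation: substituting $a_2=-a_1$ into the first identity gives
\[ e=y_{ij}(0,b)\equiv y_{ij}(a,b)\cdot y_{ij}(-a,b)\pamod{E(n,R,A\circ B)}, \]
which identifies $y_{ij}(-a,b)$ with $y_{ij}(a,b)^{-1}$ in the quotient; the relation $y_{ij}(a,-b)\equiv y_{ij}(a,b)^{-1}$ is obtained in exactly the same way from the second identity.

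For the trivialisation identities, I would argue directly that the commutator already lives in $E(n,R,A\circ B)$, bypassing Lemma~4. Since $ab_1\in AB\subseteq A\circ B$, the transvection $t_{ij}(ab_1)$ lies in $E(n,A\circ B)\le E(n,R,A\circ B)$, and the latter is normal in $E(n,R)$; thus
\[ y_{ij}(ab_1,b_2)=t_{ij}(ab_1)\cdot{}^{t_{ji}(b_2)}t_{ij}(-ab_1)\in E(n,R,A\circ B). \]
The case $y_{ij}(a_1,a_2b)$ is handled identically after swapping the roles of the two transvections and using $a_2b\in A\circ B$. I do not anticipate any serious obstacle: the whole lemma is essentially bookkeeping in commutator calculus, the only genuine input being Lemma~4. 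The single point worth checking is that the conjugating elements produced by the commutator identities (namely $t_{ij}(a_1)$ and $t_{ji}(b_1)$) are indeed in $E(n,R)$, which is exactly the hypothesis required by Lemma~4.
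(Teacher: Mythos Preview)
Your argument is correct. The commutator identities $[xy,z]={}^x[y,z]\cdot[x,z]$ and $[x,yz]=[x,y]\cdot{}^y[x,z]$ applied to the additive splittings of $t_{ij}$ and $t_{ji}$, followed by Lemma~4 to remove the residual conjugation, give the first two congruences exactly as you describe; the inverse relation is an immediate specialisation; and the trivialisation relies only on $ab_1,a_2b\in A\circ B$ together with the normality of $E(n,R,A\circ B)$ in $E(n,R)$, which holds by definition.

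There is nothing to compare against here: the paper does not supply its own proof of this lemma but merely cites it from \cite{NZ3}, Lemma~10. Your derivation is the standard one and is precisely the kind of routine commutator bookkeeping the authors have in mind. One cosmetic remark: the statement as printed has an evident typo ($y_{ij}(a_1,b)\cdot y_{ij}(a_1,b)$ should read $y_{ij}(a_1,b)\cdot y_{ij}(a_2,b)$), which you have silently corrected.
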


\begin{lemma}
Let $R$ be an associative ring with $1$, $n\ge 3$, and let $A,B$ 
be two-sided ideals of $R$. Then for any  $1\le i\neq j\le n$, any
$1\le k\neq l\le n$, and all $a\in A$, $b\in B$, $c\in R$, one has
$$ y_{ij}(ac,b)\equiv  y_{kl}(a,cb) \pamod{E(n,R,A\circ B)}. $$
\end{lemma}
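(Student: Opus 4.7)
The plan is to derive the congruence from the Hall--Witt commutator identity applied to a well-chosen triple of elementary transvections, together with Lemmas 4 and 5 and the $E(n,R)$-normality of $E(n,R,A\circ B)$. First I would reduce to a single pair of indices: by Lemma 4 combined with conjugation by Weyl elements $w_{rs}=t_{rs}(1)t_{sr}(-1)t_{rs}(1)\in E(n,R)$ (which act as signed permutations of row and column labels), and with signs absorbed using Lemma 5, one obtains $y_{rs}(a',b')\equiv y_{kl}(a',b')\pmod{E(n,R,A\circ B)}$ for every pair $(k,l)$. Thus it suffices to prove $y_{ij}(ac,b)\equiv y_{ik}(a,cb)\pmod{E(n,R,A\circ B)}$ for some single auxiliary index $k\neq i,j$, available since $n\ge 3$.

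The key algebraic observation is that the Chevalley commutator formulas
$$[t_{ik}(a),t_{kj}(c)]=t_{ij}(ac),\qquad [t_{kj}(c),t_{ji}(b)]=t_{ki}(cb)$$
allow us to rewrite
$$y_{ij}(ac,b)=\bigl[[t_{ik}(a),t_{kj}(c)],\,t_{ji}(b)\bigr],\qquad y_{ik}(a,cb)=\bigl[t_{ik}(a),\,[t_{kj}(c),t_{ji}(b)]\bigr].$$
The difference between ``outer'' and ``inner'' bracketings of the same triple $x=t_{ik}(a)$, $y=t_{kj}(c)$, $z=t_{ji}(b)$ is exactly what the Hall--Witt identity measures; its third factor involves the remaining inner commutator $[t_{ji}(b),t_{ik}(a)]=t_{jk}(ba)$.

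The main obstacle, and the step needing careful bookkeeping, is verifying that every ``error'' term introduced while expanding Hall--Witt via $[u,vw]=[u,v]\cdot{}^{v}[u,w]$ lies in $E(n,R,A\circ B)$. Each such error is a single transvection $t_{pq}(\xi)$ whose parameter $\xi$ is one of $\pm cbac$, $\pm bacb$, $\pm acba$; in every case $\xi\in AB+BA=A\circ B$, so the transvection belongs to $E(n,A\circ B)\le E(n,R,A\circ B)$. Moreover, $t_{jk}(ba)\in E(n,R,A\circ B)$ already because $ba\in A\circ B$, so the entire third Hall--Witt factor is trivial modulo $E(n,R,A\circ B)$ by normality; and the leftover conjugations sit in $E(n,R)$ and may be dropped by Lemma 4. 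What survives of Hall--Witt is the congruence $y_{ij}(ac,b)\cdot y_{ik}(a,cb)^{-1}\equiv e\pmod{E(n,R,A\circ B)}$, which together with the index reduction of the first step yields the lemma.
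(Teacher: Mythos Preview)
Your argument is sound. The paper itself does not supply a proof of this lemma; it merely records it as \cite{NZ2}, Lemma~5, or \cite{NZ3}, Lemma~11, so there is no in-paper proof to compare against. That said, the Hall--Witt strategy you outline is exactly the natural route and is essentially how the result is obtained in the cited sources: with $x=t_{ik}(a)$, $y=t_{kj}(c)$, $z=t_{ji}(b)$ the three inner commutators are $t_{ij}(ac)$, $t_{ki}(cb)$, $t_{jk}(ba)$, the last already lying in $E(n,R,A\circ B)$; the error terms coming from the conjugates carry parameters such as $cbac$, $bacb$, all in $A\circ B$; and Lemma~4 lets you strip the remaining $E(n,R)$-conjugations. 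One small remark on your index reduction: rather than invoking Weyl elements and tracking signs via Lemma~5, it is slightly cleaner to note that the Hall--Witt step itself, specialised to $c=1$, already gives $y_{ij}(a',b')\equiv y_{ik}(a',b')$ for any third index $k$, and combining this with $y_{ij}(a',b')=y_{ji}(b',a')^{-1}$ yields the full independence of the pair $(i,j)$. Either way the reduction goes through.
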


For quasi-finite rings the following result is \cite{NVAS2},
Theorem 5 and \cite{Hazrat_Vavilov_Zhang}, Theorem 2A, 
but for arbitrary associative rings it was only established in 
\cite{NZ3}, Theorem 2.

\begin{lemma}
Let $R$ be any associative ring with $1$, let $n\ge 3$, and let 
$A$ and $B$ be two-sided ideals of $R$. If $A$ and $B$ are
comaximal, $A+B=R$, then
$$ [E(n,A),E(n,B)]=E(n,R,A\circ B). $$
\end{lemma}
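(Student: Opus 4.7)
The containment $E(n,R,A\circ B)\le[E(n,A),E(n,B)]$ is already part of Lemma 3, so the task is the reverse. My plan is actually to prove the formally stronger statement
$$ [E(n,R,A),E(n,R,B)]\le E(n,R,A\circ B), $$
which together with Lemma 3 pinches all three groups into coincidence. By Lemma 2, it suffices to check that each of the generating families $z_{ij}(ab,c)$, $z_{ij}(ba,c)$, $y_{ij}(a,b)$ (with $a\in A$, $b\in B$, $c\in R$) of $[E(n,R,A),E(n,R,B)]$ lies in $E(n,R,A\circ B)$.

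The generators of $z$-type need no comaximality: since $ab$ and $ba$ both lie in $A\circ B$, the transvections $t_{ji}(ab)$ and $t_{ji}(ba)$ belong to $E(n,A\circ B)\le E(n,R,A\circ B)$, and since $E(n,R,A\circ B)$ is normal in $E(n,R)$, conjugation by $t_{ij}(c)$ keeps us inside.

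The genuinely non-trivial case is $y_{ij}(a,b)$, and this is where the hypothesis $A+B=R$ is used. Fix once and for all a decomposition $1=a_0+b_0$ with $a_0\in A$ and $b_0\in B$. Splitting $a=aa_0+ab_0$ and invoking additivity of $y_{ij}$ in the first argument (Lemma 5) gives
$$ y_{ij}(a,b)\equiv y_{ij}(aa_0,b)\cdot y_{ij}(ab_0,b) \pamod{E(n,R,A\circ B)}. $$
The right factor is killed by the last clause of Lemma 5, applied with the factorisation $ab_0$ where $a\in A$ and $b_0\in B$. For the left factor, Lemma 6 migrates the $a_0$ from the end of the first argument into the beginning of the second:
$$ y_{ij}(aa_0,b)\equiv y_{kl}(a,a_0 b)\pamod{E(n,R,A\circ B)}, $$
and a second invocation of the last clause of Lemma 5, now with the factorisation $a_0 b$ of the second argument, also forces this term to $e$. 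Thus $y_{ij}(a,b)\in E(n,R,A\circ B)$, completing the plan.

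The chief obstacle is precisely this last step: one cannot avoid using comaximality to rewrite a single element of $A$ as a sum of products each lying in $AB$ or $BA$, and only then do the bilinearity and index-swap lemmas collapse the elementary commutator $y_{ij}(a,b)$ to the identity modulo $E(n,R,A\circ B)$. The $z$-type generators are essentially formal and bring no genuine difficulty; everything hinges on the interaction between the choice of $a_0,b_0$ and Lemmas 5 and 6.
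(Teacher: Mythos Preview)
Your argument is correct. The paper itself does not prove this lemma; it merely cites it as \cite{NZ3}, Theorem~2 (with the earlier quasi-finite case attributed to \cite{NVAS2} and \cite{Hazrat_Vavilov_Zhang}). What you have written is in effect a self-contained proof built exactly from the toolkit the paper imports in \S5: Lemma~2 to reduce to the three generator types, Lemma~3 for the easy containment, and Lemmas~5--6 to kill the elementary commutators $y_{ij}(a,b)$ via a comaximal splitting $1=a_0+b_0$. This is precisely the strategy of \cite{NZ3}, so there is no substantive difference in approach; you have simply unpacked the citation.

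One minor remark: your invocation of Lemma~6 for $y_{ij}(aa_0,b)\equiv y_{kl}(a,a_0b)$ is fine, but you could equally well have split $b=a_0b+b_0b$ in the second argument instead of splitting $a$ in the first; both routes use Lemmas~5 and~6 once each and are equivalent. Either way, your observation that the $z$-type generators are purely formal and that the entire weight falls on decomposing $y_{ij}(a,b)$ via comaximality is exactly right.
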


The following result is \cite{NVAS2}, Theorem 4. 

\begin{lemma}
Let $A$ and $B$ be two ideals of a commutative 
ring $R$ and  $n\ge 3$. Then 
$$ \big[E(n,R,A),C(n, R, B)\big]=\big[E(n,R,A),E(n,R,B)\big], $$
\end{lemma}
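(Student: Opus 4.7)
The inclusion $[E(n,R,A),E(n,R,B)]\subseteq[E(n,R,A),C(n,R,B)]$ is immediate since $E(n,R,B)\le C(n,R,B)$, so all the content is in the reverse inclusion.

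For the reverse direction, I would first observe that $C(n,R,B)/\GL(n,R,B)$ equals the centre of $\GL(n,R/B)$ by the very definition of $C(n,R,B)$, so for every $x\in E(n,R,A)$ and $g\in C(n,R,B)$ the commutator $[x,g]$ automatically falls in $\GL(n,R,B)$. Combined with the Suslin normality of $E(n,R,A)$ in $\GL(n,R)$ for commutative $R$, also $[x,g]\in E(n,R,A)$. The resulting bound $[E(n,R,A),C(n,R,B)]\le E(n,R,A)\cap\GL(n,R,B)$ is however far too crude -- for instance it contains $E(n,R,A\cap B)$, which in general exceeds $[E(n,R,A),E(n,R,B)]$ -- so one must descend to the level of elementary generators.

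The next reduction uses Lemma~1: $E(n,R,A)$ is generated by $z_{ij}(a,c)=t_{ij}(c)t_{ji}(a)t_{ij}(-c)$. Via the identity
$$[z_{ij}(a,c),g]=t_{ij}(c)\bigl[t_{ji}(a),\,t_{ij}(-c)\,g\,t_{ij}(c)\bigr]t_{ij}(-c),$$
together with the normality of $C(n,R,B)$ in $\GL(n,R)$ and the fact that $[E(n,R,A),E(n,R,B)]$ is normalised by $E(n,R)$, the problem collapses to showing $[t_{ij}(a),g]\in[E(n,R,A),E(n,R,B)]$ for all $a\in A$ and $g\in C(n,R,B)$. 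Writing $g=\lambda e+b$ with $\lambda$ a lift of $\bar\lambda\in(R/B)^*$ and $b\in M_n(B)$, a short matrix calculation gives
$$[t_{ij}(a),g]=e-a\,t_{ij}(a)\bigl(ge_{ij}g^{-1}-e_{ij}\bigr),$$
and commutativity of $R$ forces $ge_{ij}g^{-1}-e_{ij}$ to have every entry in $B$, placing the commutator inside $\GL(n,R,A\circ B)$. The plan is then to realise this explicit matrix as a product of the generators $z_{kl}(a'b',c)$ and $y_{kl}(a',b')$ of $[E(n,R,A),E(n,R,B)]$ supplied by Lemma~2, using Lemmas~4--6 to manipulate the elementary commutators $y_{kl}$ modulo $E(n,R,A\circ B)\subseteq[E(n,R,A),E(n,R,B)]$ (Lemma~3).

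The main obstacle will be this final explicit decomposition. One might be tempted to split $g=z\cdot h$ with $z$ a central scalar matrix and $h\in\GL(n,R,B)$ and then quote the standard birelative commutator formula $[E(n,R,A),\GL(n,R,B)]=[E(n,R,A),E(n,R,B)]$, but this multiplicative splitting is generally unavailable because $\bar\lambda\in(R/B)^*$ need not lift to a global unit of $R$. The calculation therefore has to be performed with $g$ in its inherent form $\lambda e+b$, and the commutativity of $R$ will have to be invoked essentially through the scalar-shifting identities of Lemmas~5 and~6 in order to read off $[t_{ij}(a),g]$ as an actual product of the generators listed in Lemma~2.
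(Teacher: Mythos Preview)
The paper does not prove this lemma at all; it simply records it as Theorem~4 of \cite{NVAS2}. So there is no in-paper argument to compare your proposal against, only the external one.

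Your proposal is not a proof but an outline that stops at the hard part. You reduce correctly to showing $[t_{ij}(a),g]\in[E(n,R,A),E(n,R,B)]$ for $g\in C(n,R,B)$, and your matrix identity $[t_{ij}(a),g]=e-a\,t_{ij}(a)(ge_{ij}g^{-1}-e_{ij})$ is right, as is the observation that this lands in $\GL(n,R,A\circ B)$. But then you write ``the plan is then to realise this explicit matrix as a product of the generators\ldots\ the main obstacle will be this final explicit decomposition'' and stop. That obstacle \emph{is} the lemma. Membership in $\GL(n,R,A\circ B)$ is far from sufficient, and Lemmas~2--6 let you manipulate known products of $y_{kl}$'s and $z_{kl}$'s modulo $E(n,R,A\circ B)$, not recognise an arbitrary level-$A\circ B$ matrix as such a product. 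Your expression still involves $g^{-1}$, whose entries are uncontrolled beyond the congruence $g^{-1}\equiv\lambda' e\pmod B$; nothing you have written forces the result into the span of the Lemma~2 generators.

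The argument in \cite{NVAS2} takes a different route, via localisation and patching in the commutative setting. Over a local ring the obstruction you correctly identify---that $\bar\lambda\in(R/B)^*$ need not lift to a unit of $R$---disappears, so locally $C(n,R,B)=R^*\cdot\GL(n,R,B)$ and the statement reduces to the relative standard commutator formula $[E(n,R,A),\GL(n,R,B)]=[E(n,R,A),E(n,R,B)]$; one then globalises. If you want to pursue a direct elementary proof along your lines, you would in effect be reproving that relative commutator formula by hand, which is substantially more work than what you have sketched.
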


Finally, the following lemma is \cite{NV19}, Theorem 2.

\begin{lemma} 
Let $A$ and $B$ be two ideals of a Dedekind ring of arithmetic 
type $R={\mathcal O}_S$. Assume that the multiplicative 
group $R^*$ is infinite and that $n\ge 3$. Then
$$ \big[\GL(n,R,A),\GL(n,R,B)\big]=E(n,R,AB). $$
\end{lemma}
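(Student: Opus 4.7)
The plan is to prove the two inclusions separately. The easy direction $E(n,R,AB) \le [\GL(n,R,A), \GL(n,R,B)]$ is essentially Lemma~3: since $R$ is commutative, $A \circ B = AB$, and
$$ E(n,R,AB) = E(n,R,A\circ B) \le [E(n,R,A), E(n,R,B)] \le [\GL(n,R,A), \GL(n,R,B)]. $$

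For the reverse inclusion, I would invoke the Bass--Milnor--Serre solution of the congruence subgroup problem for $\SL_n$, $n \ge 3$, over a Dedekind ring of arithmetic type with infinite unit group: $\SK_1(R,I) = 0$ for every ideal $I \unlhd R$, and hence $\SL(n,R,I) = E(n,R,I)$. Via the determinant, this yields the factorisation
$$ \GL(n,R,I) = E(n,R,I)\cdot D_I, \qquad D_I = \{\diag(u,1,\ldots,1) : u \in R^*,\ u-1 \in I\}, $$
so every $g \in \GL(n,R,A)$ can be written as $g = \epsilon d$ with $\epsilon \in E(n,R,A)$ and $d \in D_A$, and every $h \in \GL(n,R,B)$ as $h = \epsilon' d'$ analogously. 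In parallel, Lemma~8 combined with Lemma~3 gives
$$ [\GL(n,R,A), E(n,R,B)] \le [C(n,R,A), E(n,R,B)] = [E(n,R,A), E(n,R,B)] \le E(n,R,AB), $$
the last step using $[E(n,R,A),E(n,R,B)] \le E(n,R) \cap \GL(n,R,AB) = \SL(n,R,AB) = E(n,R,AB)$ by the congruence subgroup property applied to $AB$; the symmetric $[E(n,R,A), \GL(n,R,B)] \le E(n,R,AB)$ follows in exactly the same way.

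With these ingredients in hand, I would expand $[g,h] = [\epsilon d, \epsilon' d']$ via the standard identities $[xy,z] = {}^x[y,z]\cdot[x,z]$ and $[x,yz] = [x,y]\cdot {}^y[x,z]$, producing $[g,h]$ as a product of conjugates of the four commutators $[\epsilon,\epsilon']$, $[\epsilon,d']$, $[d,\epsilon']$ and $[d,d']$. The last one vanishes because $d$ and $d'$ are diagonal matrices over the commutative ring $R$, while the other three lie in $E(n,R,AB)$ by the preceding paragraph. Since $E(n,R,AB)$ is normal in $E(n,R)$, the outer conjugations by $\epsilon$ and $\epsilon'$ do not leave it, and the inclusion $[\GL(n,R,A),\GL(n,R,B)] \le E(n,R,AB)$ follows.

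The main obstacle, as I see it, is the collapse $[E(n,R,A), E(n,R,B)] = E(n,R,AB)$: Lemma~3 only locates this commutator subgroup between $E(n,R,AB)$ and $\GL(n,R,AB)$, and squeezing it down to the elementary subgroup is precisely where the arithmetic-type hypothesis and the infinitude of $R^*$ become indispensable, through the vanishing of $\SK_1(R,AB)$. Everything else is formal commutator bookkeeping inside $\GL(n,R)$.
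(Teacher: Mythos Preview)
Your argument is correct. The paper does not actually prove this lemma; it simply records it as \cite{NV19}, Theorem~2, and invokes it as a black box in the proof of Theorem~3. So there is no ``paper's own proof'' to compare against here.

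That said, your route is the natural one, and almost certainly close to what \cite{NV19} does: the decisive input is the vanishing of $\SK_1(R,I)$ for all ideals $I$, which under the hypothesis that $R^*$ is infinite follows from Bass--Milnor--Serre in the number-field case and its analogues in the function-field case. Once $\SL(n,R,I)=E(n,R,I)$, your splitting $\GL(n,R,I)=E(n,R,I)\cdot D_I$ reduces the whole claim to Lemma~8 plus the observation that $[E(n,R,A),E(n,R,B)]\le E(n,R)\cap\GL(n,R,AB)\le\SL(n,R,AB)=E(n,R,AB)$. The commutator bookkeeping with $[d,d']=e$ and the normality of $E(n,R,AB)$ in $E(n,R)$ is routine. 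One small remark: for the conjugations at the end you only need normality in $E(n,R)$, which is immediate from the definition of $E(n,R,AB)$ as a normal closure, so you do not even need to appeal to Suslin's normality theorem.
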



\section{Centralisers of $E(n,R,A)$ and $\GL(n,R,A)$, 
modulo $E(n,R,B)$}

The group $Z(n,R,I)$ is defined as the centraliser of $\GL(n,R)$ modulo
$E(n,R,I)$:
$$ Z(n,R,I)=\big\{g\in\GL(n,R) \mid [g,\GL(n,R)]\le E(n,R,I) \big\}. $$
\noindent
When $E(n,R,I)$ is normal in $\GL(n,R)$, the quotient
$Z(n,R,I)/E(n,R,I)$ is the centre of $\GL(n,R)/E(n,R,I)$. 
\par
Let us make some 
obvious observations concerning this group.
\par\smallskip
$\bullet$ By definition
$$ C(n,R,I)=\big\{g\in\GL(n,R) \mid [g,\GL(n,R)]\le\GL(n,R,I) \big\}. $$
\noindent
In other words, $C(n,R,I)/\GL(n,R,I)$ is the centre of $\GL(n,R)/\GL(n,R,I)$.
Since $E(n,R,I)\le\GL(n,R,I)$, one has $Z(n,R,I)\le C(n,R,I)$. 
\par\smallskip
$\bullet$ Since $[\GL(n,R,I),\GL(n,R)]\le E(n,R,I)$ for $n\ge\max(\sr(R)+1,3)$,
in this case $E(n,R,I)$ is normal in $\GL(n,R)$ and 
$\GL(n,R,I)/E(n,R,I)$ is contained in the centre of $\GL(n,R)/E(n,R,I)$. Thus,
in the stable range
$$ \GL(n,R,I)\le Z(n,R,I)\le C(n,R,I). $$
\noindent
However even in the stable range, it may happen that $Z(n,R,I)$ is strictly
smallser than $C(n,R,I)$. 
\par\smallskip
$\bullet$ Below the stable range funny things may happen. In particular,
below the stable range even for commutative rings and $n\ge 3$ the group
$$ K_1(n,R,I)=\GL(n,R,I)/E(n,R,I) $$
\noindent
does not have to be abelian. The first such counter-examples were constructed 
by Wilberd van der Kallen \cite{vdK_module_structure} and Anthony Bak 
\cite{Bak_nonabelian}. For finite dimensional rings this group is indeed 
nilpotent by abelian, but the nilpotent part may have arbitrarily large
nilpotency class. 

This means that $Z(n,R,I)$ may sit at the very bottom of $\GL(n,R,I)$. 
Both Alec Mason \cite{Mason_nonnormal} and Anthony Bak 
\cite{Bak_nonabelian}
used the fact that $Z(n,R,I)<C(n,R,I)$ to construct subgroups of
level $I$ that are normalised by $E(n,R)$, but not normal in $\GL(n,R)$.

This means that in general such relative centralisers as 
$$  C_{\GL(n,R)}(\GL(n,R,A),E(n,R,B))\quad \text{and}\quad C_{\GL(n,R)}(E(n,R,A),E(n,R,B)) $$
\noindent
do not have an obvious description in the style of the previous 
section. But for commutative rings it is very easy to slightly 
modify $E(n,R,B)$, to get exactly the same answer, as above.
In this case all commutativity assumptions are automatically satisfied, 
so that $C(n,R,I)=C^*(n,R,I)$ for all ideals.

\begin{theorem}
Let $R$ be a commutative ring and $A,B\unlhd R$, $n\ge 3$. Then 
$$ C_{\GL(n,R)}(E(n,R,A),[E(n,R,(B:A)),E(n,R,A)]) = C(n,R,(B:A)). $$
\end{theorem}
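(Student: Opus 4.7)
The plan is to sandwich the centraliser between two groups that Theorem~1 and Lemma~8 force to be equal, using the principal congruence subgroup $\GL(n,R,B)$ as an intermediate object. Write $C=(B:A)$ for brevity; since $R$ is commutative, the text already notes that $C_{\Omega(A,B)}(n,R,C)=C(n,R,C)$, so Theorem~1 reads
$$ C_{\GL(n,R)}(E(n,R,A),\GL(n,R,B))=C(n,R,C). $$

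For the inclusion $\subseteq$, I would first observe that, since $R$ is commutative, $C\circ A=CA\subseteq B$ by the very definition of $C=(B:A)$. Lemma~3 then yields
$$ [E(n,R,C),E(n,R,A)]\le\GL(n,R,C\circ A)\le\GL(n,R,B). $$
Any $g$ centralising $E(n,R,A)$ modulo the smaller group $[E(n,R,C),E(n,R,A)]$ a fortiori centralises it modulo $\GL(n,R,B)$, so Theorem~1 (applied to $H=E(n,R,A)$, which satisfies $E(n,A)\le H\le\GL(n,R,A)$) places such a $g$ in $C(n,R,C)$.

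The reverse inclusion is where Lemma~8 does the real work. Take $g\in C(n,R,C)$; then $[g,x]\in[C(n,R,C),E(n,R,A)]$ for every $x\in E(n,R,A)$. Lemma~8, applied with the roles $A\leftrightarrow A$ and $B\leftrightarrow C$, gives
$$ \bigl[E(n,R,A),C(n,R,C)\bigr]=\bigl[E(n,R,A),E(n,R,C)\bigr], $$
so $[g,x]$ lies in $[E(n,R,C),E(n,R,A)]$, which is precisely what membership in $C_{\GL(n,R)}(E(n,R,A),[E(n,R,C),E(n,R,A)])$ demands.

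The two directions together give the theorem. Conceptually nothing is difficult once Theorem~1 and Lemma~8 are in hand; the only point that has to be watched is the interplay $CA\subseteq B$, which needs commutativity so that the symmetrised product $C\circ A$ collapses to $CA$ and thus lands inside $B$. That is what makes $\GL(n,R,B)$ an honest overgroup of $[E(n,R,C),E(n,R,A)]$, and it is also the reason the argument is not expected to extend verbatim to the noncommutative setting.
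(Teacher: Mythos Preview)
Your proof is correct and follows essentially the same two-step strategy as the paper: Lemma~8 for the inclusion $\supseteq$, and Theorem~1 after passing to a principal congruence overgroup for $\subseteq$. The only difference is that you pass directly to $\GL(n,R,B)$ via $(B:A)A\le B$, whereas the paper passes to $\GL(n,R,(B:A)A)$ and then has to check the ideal identity $((B:A)A:A)=(B:A)$; your route is a mild shortcut but not a different argument.
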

\begin{proof}
By Lemma~8 one has
$$ [C(n,R,(B:A)),E(n,R,A)]=[E(n,R,(B:A)),E(n,R,A)]. $$
\noindent
In other words, the right hand side of the equality in the statement
of theorem is contained in the left hand side.
\par
On the other hand, 
\begin{multline*}
[E(n,R,(B:A)),E(n,R,A)]\le [\GL(n,R,(B:A)),\GL(n,R,A)]\le\\ \GL(n,R,(B:A)A),
\end{multline*}
\noindent
and thus, by Theorem 1
\begin{multline*}
C_{\GL(n,R)}(E(n,R,A),[E(n,R,(B:A)),E(n,R,A)])\le\\ 
C_{\GL(n,R)}(E(n,R,A),\GL(n,R,(B:A)A))=C(n,R,((B:A)A:A)).
\end{multline*}
\par
It remains only to observe that the level of this last subgroup is precisely the
what it should be, $((B:A)A:A)=(B:A)$. Indeed, $x\in((B:A)A:A)$
means that $xA\le (B:A)A\le B$, so that $((B:A)A:A)\le (B:A)$. On the other 
hand, if $x\in (B:A)$, then $xA\le (B:A)A$, so that $(B:A)\le ((B:A)A:A)$.
This means that $C(n,R,((B:A)A:A))= C(n,R,(B:A))$, which proves the
theorem.
\end{proof}

Of course, the level of the commutator subgroup 
$[E(n,R,(B:A)),E(n,R,A)]$ is $(B:A)A$, which in general is {\it smaller\/} 
than $B$. However, for Dedekind rings always $(B:A)A=B$.
\par
On the other hand, when the levels coincide, the mixed commutator subgroup
$[E(n,R,(B:A)),E(n,R,A)]$ is in general {\it larger\/} than $E(n,R,B)$.
It suffices to take the known examples, where $A=(B:A)=I$, while $B=I^2$,
see \cite{NZ3}. The simplest such example was constructed by Alec Mason and
Wilson Stothers \cite{Mason_Stothers, Mason_commutators_2} already for
the ring $\Int[i]$ of Gaussian integers. However, in \cite{NV19} the first author 
noticed that this cannot possibly occur for Dedekind rings of arithmetic type with 
{\it infinite\/} multiplicative group.

\begin{theorem}
Let $R$ be a Dedekind ring of arithmetic type with infinite multiplicative
group and $A,B\unlhd R$, $n\ge 3$. Then 
$$ C_{\GL(n,R)}(E(n,R,A),E(n,R,B)) = C(n,R,(B:A)). $$
\end{theorem}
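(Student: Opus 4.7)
The plan is to reduce Theorem~3 to Theorem~2 by showing that the precise equality
$$ [E(n,R,(B:A)),E(n,R,A)] = E(n,R,B) $$
holds in this setting. Once this identity is in hand, the centraliser on the left hand side of the statement in Theorem~3 is literally the centraliser appearing in Theorem~2, and the equality with $C(n,R,(B:A))$ is immediate.

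To establish the identity, I would use the two sandwich inclusions separately. For the upper bound, the key input is Lemma~9 applied to the pair of ideals $(B:A)$ and $A$: since $R={\mathcal O}_S$ is a Dedekind ring of arithmetic type with infinite $R^*$, one has
$$ [E(n,R,(B:A)),E(n,R,A)] \le \bigl[\GL(n,R,(B:A)),\GL(n,R,A)\bigr] = E(n,R,(B:A)A). $$
The crucial Dedekind identity $(B:A)A=B$, quoted in the paragraph preceding Theorem~3, then turns the right hand side into $E(n,R,B)$.

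For the lower bound, I would invoke Lemma~3 with the ideal pair $(B:A)$ and $A$. Since $R$ is commutative, the symmetrised product $(B:A)\circ A$ collapses to $(B:A)A$, so Lemma~3 yields
$$ E(n,R,(B:A)A) \le \bigl[E(n,R,(B:A)),E(n,R,A)\bigr]. $$
Again invoking $(B:A)A=B$, this gives $E(n,R,B)\le [E(n,R,(B:A)),E(n,R,A)]$, completing the verification of the identity.

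With $[E(n,R,(B:A)),E(n,R,A)] = E(n,R,B)$ established, Theorem~2 provides exactly the claimed equality. The only genuinely non-formal ingredients are the Dedekind-arithmetic identity $(B:A)A=B$ and the equality $[\GL(n,R,(B:A)),\GL(n,R,A)] = E(n,R,(B:A)A)$ of Lemma~9; everything else is a bookkeeping rearrangement. The main obstacle would be if one tried to prove Theorem~3 directly without Theorem~2, since pinning down the diagonal entries of a centralising matrix modulo a subgroup as small as $E(n,R,B)$ is difficult precisely because $K_1(n,R,B)$ need not be trivial or even abelian in general. The trick is to pass through the slightly larger, easier-to-handle subgroup $[E(n,R,(B:A)),E(n,R,A)]$ and only at the end exploit the special features of Dedekind rings of arithmetic type with infinite unit group to identify it with $E(n,R,B)$.
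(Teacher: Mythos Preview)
Your proposal is correct and follows essentially the same route as the paper: establish $[E(n,R,(B:A)),E(n,R,A)]=E(n,R,(B:A)A)=E(n,R,B)$ and then invoke Theorem~2. The paper compresses the two inclusions into a single citation of Lemma~9, leaving the lower bound from Lemma~3 implicit, whereas you spell out both directions explicitly; the content is identical.
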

\begin{proof}
Indeed, by Lemma 9, one has
$$ [E(n,R,(B:A)),E(n,R,A)]=E(n,R,(B:A)A)=E(n,R,B), $$
\noindent
and it remains to apply the previous theorem.
\end{proof}


\section{Final remarks}

It would be natural to generalise results of the present paper
to more general contexts.

\begin{problem}
Generalise Theorems $1$ and $2$ to Chevalley groups.
\end{problem}

\begin{problem}
Generalise Theorems $1$ and $2$ to Bak's unitary groups.
\end{problem}

It seems, that in both cases the strategy is clear, but there are a lot 
of technical details to take care of. For Chevalley groups the ground
ring $R$ is commutative anyway, which makes many technical details
considerably less burdensome. On the other hand, calculations
in representations themselves necessary to establish analogues of 
Theorem 1, will be somewhat more delicate, especially for exceptional 
groups. But the pattern of such calculations
should be mostly known from \cite{VG,VLE6,VLE7,VN}. Most tools
necessary to derive from there an analogue of Theorem 2, are in our
recent papers \cite{NZ1,NZ4}. On the other hand, so far we are still
missing several key components necessary to generalise to this case 
the results of \cite{NV19}, and before we do that, there is no hope 
to generalise Theorem 3.

The authors thank Roozbeh Hazrat and Alexei Stepanov for ongoing 
discussion of this circle of ideas, and long-standing cooperation 
over the last decades. 


\frenchspacing
\par
\medskip

\providecommand{\bysame}{\leavevmode\hbox to3em{\hrulefill}\thinspace}
\providecommand{\MR}{\relax\ifhmode\unskip\space\fi MR }
\providecommand{\MRhref}[2]{%
  \href{http://www.ams.org/mathscinet-getitem?mr=#1}{#2}
}
\providecommand{\href}[2]{#2}

\end{document}